\title{On constructions preserving the asymptotic topology of metric spaces}
\author{Gregory C. Bell}
\address{Mathematics and Statistics, University of North Carolina at Greensboro, Greensboro, NC 27412, USA}
\email{\href{mailto:gcbell@uncg.edu}{\nolinkurl{gcbell@uncg.edu}}}
\author{Danielle S. Moran}
\address{Mathematics, Guilford College, 5800 West Friendly Ave, Greensboro, NC 27410, USA}
\email{\href{mailto:morands@guilford.edu}{\nolinkurl{morands@guilford.edu}}}
\date{\today} 
\keywords{Asymptotic dimension; graph products; Property A; asymptotic property C; finite decomposition complexity}
\subjclass[2010]{20F69 (primary); 20F65, 20E06 (secondary)}
\theoremstyle{plain}
\newtheorem{theorem}{Theorem}[section]
\newtheorem{lemma}[theorem]{Lemma}
\newtheorem{proposition}[theorem]{Proposition}
\newtheorem{conjecture}[theorem]{Conjecture}
\newtheorem{corollary}[theorem]{Corollary}
\theoremstyle{definition}
\newtheorem{question}[theorem]{Question}
\theoremstyle{remark}
\renewcommand{\epsilon}{\varepsilon}
\newcommand{\NN}{\ensuremath{\mathbb{N}}}
\newcommand{\RR}{\ensuremath{\mathbb{R}}}
\newcommand{\G}{\ensuremath{\Gamma}}
\newcommand{\g}{\ensuremath{\gamma}}
\newcommand{\s}[1]{\ensuremath{\mathcal{#1}}}
\newcommand{\sX}{\s{X}}
\newcommand{\sU}{\mathcal{U}}
\newcommand{\sV}{\mathcal{V}}
\newcommand{\sW}{\mathcal{W}}
\newcommand{\sY}{\mathcal{Y}}
\newcommand{\fG}{\mathfrak{G}}
\DeclareMathOperator{\as}{asdim}
\DeclareMathOperator{\diam}{diam}
\DeclareMathOperator{\lk}{lk}
\begin{document}

\begin{abstract} We prove that graph products constructed over infinite graphs with bounded clique number preserve finite asymptotic dimension. We also study the extent to which Yu's property A, Dranishnikov's property C, and Dranishnikov and Zarichnyi's straight finite decomposition complexity are preserved by constructions such as unions, free products, and group extensions.
\end{abstract}
\maketitle
\section{Introduction}

The asymptotic dimension of a metric space was introduced by Gromov \cite{Gr93} in his study of large-scale invariants of finitely generated groups. It is the large-scale analog of covering dimension in topology. Although interesting in its own right, asymptotic dimension gained the interest of the larger mathematical community following the work of Yu \cite{Yu98}. Yu showed that a finitely generated group with finite asymptotic dimension satisfies the famous Novikov higher signature conjecture. This generated interest in determining whether the asymptotic dimension of various groups and classes of groups is finite. Although later work (e.g. \cite{Yu00,KY}, among others) has refined the technology to determine whether a group satisfies the Novikov (or related) conjectures, there is still a great deal of interest in this simple large-scale invariant.

The asymptotic dimension is a coarse invariant. 
Each countable group can be endowed with a proper left-multiplication invariant metric that is unique up to coarse equivalence (see section 2). This means that the large-scale invariants associated to a group with such a metric are group invariants and are independent of choices involved in determining the specific metric. This also makes the class of countable groups a natural one for the study of asymptotic invariants. On the other hand, proper left-invariant metrics are not always natural, e.g. $\mathbb{Q}$ with its usual metric is not proper.

The class of groups with finite asymptotic dimension is vast and contains (among many others) hyperbolic groups \cite{Ro05}, nilpotent groups \cite{BD-Hur}, solvable groups with rational Hirsch length \cite{Dr-Smi}, Coxeter groups \cite{DJ}, mapping class groups \cite{BBF}, and groups admitting a proper isometric action on finite dimensional CAT(0)-cube complexes \cite{Wright}. 
Moreover this class is closed under the operations of (finite) direct product, free products with amalgamation, and group extensions, see \cite{BD-Hur}. Recently, Antol\'in and Dreesen \cite{An-Dre} computed a formula for the asymptotic dimension of a graph product of groups using results of \cite{Dr-amalg} and \cite{Green}. The first main result of this paper is to extend this result to certain graph products over infinite graphs: Theorem \ref{Asdim-Infinite-Graphs}. Because these graphs are infinite, the techniques used by Antol\'in and Dreesen are not applicable. Instead we exploit the structure of these graph products to explicitly construct the covers from the definition of asymptotic dimension at each scale $R$.

Dranishnikov, Keesling and Uspenskij \cite{DKU} showed that $\as\mathbb{Z}^n=n$, so any group that contains a copy of $\mathbb{Z}^n$ for each $n$ will necessarily have infinite asymptotic dimension. Such groups are not difficult to construct (for example, see \cite{Ro03}). For groups and spaces whose asymptotic dimension may be infinite, one can consider other dimension-like coarse invariants, such as asymptotic property C, finite decomposition complexity, straight finite decomposition complexity, or property A. For metric spaces with bounded geometry, finite asymptotic dimension implies both asymptotic property C and finite decomposition complexity. Both of these notions imply straight finite decomposition complexity. Finally, spaces with straight finite decomposition complexity have Yu's property A. See \cite{Gold} for a nice summary of these implications. 

The second goal of this paper is to apply the techniques of \cite{An-Dre} to point out that graph products of groups with property A have property A. Although, we would like to extend this result to asymptotic property C, it is not clear that it does extend. In particular, the standard approach to such properties breaks down completely in the case of property C. We were only able to show that asymptotic property C is preserved by certain infinite unions (Theorem \ref{C-Union}). We cannot show that it is preserved by amalgamated free products or direct products. If amalgams and direct products could be shown to preserve asymptotic property C, then the techniques of \cite{An-Dre} could be applied to show that graph products preserve it. In light of the result of Pol and Pol \cite{Pol-Pol}, it is conceivable that there is some space (or even a group) with asymptotic property C whose square does not have asymptotic property C.

The final goal of the paper is to prove some permanence results for straight finite decomposition complexity along the lines of those shown in \cite{GTY1}. 

The paper is organized as follows. In the next section we recall some of basic facts and give precise definitions. In Section 3 we state and prove the main theorem concerning infinite graph products and asymptotic dimension. We also show that property A is preserved by finite graph products. In the fourth section, we prove that asymptotic property C is preserved by certain infinite unions and state some open questions concerning asymptotic property C. The permanence properties of straight finite decomposition complexity appear in the final section. It should be noted that many of the goals of this paper align with the excellent survey by Guentner \cite{Guentner-Survey}.

The authors wish to thank the anonymous referees of this paper for careful reading, corrections, and for helping to clarify several points.

\section{Preliminary notions and definitions}

Let $(X,d_X)$ and $(Y,d_Y)$ be metric spaces. Recall that a map $f:X\to Y$ is called \textit{proper} if the preimage of every compact set is compact. A metric is called \textit{proper} if the distance function is a proper map, i.e., if closed balls are compact.
A function $f:X\to Y$ is called {\em uniformly expansive} if there is a non-decreasing $\rho_2:[0,\infty)\to[0,\infty)$ such that 
\[d_Y(f(x),f(x'))\le\rho_2(d_X(x,x')).\]
The function $f:X\to Y$ is called {\em effectively proper} if there is some proper, non-decreasing $\rho_1:[0,\infty)\to[0,\infty)$ such that
\[\rho_1(d_X(x,x'))\le d_Y(f(x),f(x')).\]
The function $f:X\to Y$ is called a {\em coarsely uniform embedding} if there exist functions $\rho_1,\rho_2:[0,\infty)\to[0,\infty)$ such that, \[\rho_1(d_X(x,x'))\le d_Y(f(x),f(x'))\le \rho_2(d_X(x,x'))\] and $\rho_1\to\infty$. The spaces $X$ and $Y$ are said to be {\em coarsely equivalent} if there is a coarsely uniform embedding of $X$ to $Y$ and there is some $R>0$ so that for each $y\in Y$ there is some $x\in X$ so that $d_Y(f(x),y)\le R$. When the $\rho_i$ can be taken to be linear, $f$ is called a {\em quasi-isometric embedding} and the corresponding equivalence is {\em quasi-isometry}. 

Let $R>0$ be a (large) real number. A collection $\sU$ of subsets of the metric space $X$ is said to be \textit{uniformly bounded} if there is a uniform bound on the diameter of the sets in $\sU$; a collection $\sU$ is said to be \textit{$R$-disjoint} if, whenever $U\neq U'$ are sets in $\sU$, then $d(U,U')>R$, where $d(U,U')=\inf\{d(x,x')\mid x\in U, x'\in U'\}$. A family that is uniformly bounded and $R$-disjoint will be called \text{$R$-discrete}. Gromov \cite{Gr93} describes this situation by saying that $\cup_{U\in\sU} U$ is $0$-{\em dimensional on} $R$-{\em scale}. 

We say that the {\em asymptotic dimension} of the metric space $X$ does not exceed $n$ and write $\as X\le n$ if for each (large) $R>0$, $X$ can be written as a union of $n+1$ sets with dimension $0$ at scale $R$. 
There are several other useful formulations of the definition (see \cite{BD-AD}) but we shall content ourselves with this one.

Yu defined property A for discrete metric spaces as a generalization of amenability of groups \cite{Yu00}. A discrete metric space $X$ has {\em property A} if for any $r>0$ and any $\epsilon>0$, there is a collection of finite subsets $\{A_x\}_{x\in X}$, where $A_x\subset X\times\NN$, so that 
\begin{enumerate}
	\item $(x,1)\in A_x$ for each $x\in X$;
	\item for every pair $x$ and $y$ in $X$ with $d(x,y)<r$, $\frac{|A_x\Delta A_y|}{|A_x\cap A_y|}<\epsilon$; and 
	\item there is some $R$ so that for every $n$ such that $(y,n)\in A_x$, $d(x,y)\le R$.
\end{enumerate}

The asymptotic analog of Haver's property C for metric spaces was defined by Dranishnikov \cite{Dr00}. We say that a metric space $X$ has {\em asymptotic property C} if for any given number sequence $R_1\le R_2\le R_3\le\cdots$ there exists some integer $n$ and a cover of $X$ that can be decomposed into $n$ uniformly bounded families $\sU^1,\sU^2,\ldots,\sU^n$ in such a way that each $\sU^i$ is $R_i$-disjoint (and $\cup_{i=1}^n\sU^i$ covers $X$).  It is clear that a metric space with finite asymptotic dimension will have asymptotic property C. Dranishnikov showed that a discrete metric space with bounded geometry and asymptotic property C also has property A, see \cite[Theorem 7.11]{Dr00}. 

In two papers \cite{GTY1,GTY2} Guentner, Tessera and Yu defined another coarse invariant of groups that is applicable when the asymptotic dimension is infinite: finite decomposition complexity. Following this, Dransihnikov and Zarichnyi defined a related notion: straight finite decomposition complexity. By way of notation, we write $A=\displaystyle\sqcup_{\hbox{\tiny$R$-disjoint}}A_i$ to mean that the subset $A$ can be decomposed as a union of sets $A_i$ in such a way that $d(A_i,A_j)>R$ whenever $i\neq j$.
Let $\sX$ and $\sY$ be families of metric spaces. For a positive $R$, we say that $\sX$ is $R$-decomposable over $\sY$ and write $\sX\xrightarrow{R}\sY$ if for any $X\in\sX$ one can write 
\[X=Y^0\cup Y^1\, \hbox{ where } Y^i=\bigsqcup_{\tiny R\hbox{-disjoint}} Y^{ij},\hbox{ for i=0,1,}\]
where the sets $Y^{ij}\in\sY$.

We begin by describing the metric decomposition game for $X$. In this game two players take turns. First, Player 1 asserts a number $R_1$. Player 2 responds by finding a metric family $\sY^1$ and a $R_1$-decomposition of $\{X\}$ over $\sY^1$. Then, Player 1 selects a number $R_2$ and Player 2 again finds a family $\sY^2$ and an $R_2$-decomposition of $\sY^1$ over $\sY^2$. Player 2 wins if the game ends in finitely many steps with a family that consists of uniformly bounded subsets. The metric space $X$ is said to have {\em finite composition complexity} or FDC, if there is a winning strategy for Player 2 in the metric decomposition game for $X$, see \cite{GTY1}.

Let $\sX$ be a family of metric spaces. We say that the metric family $\sX$ has {\em straight finite decomposition complexity} sFDC if for every sequence $R_1\le R_2\le \cdots$ there exists an $n$ and metric families $\sY^i$ ($i=1,2,3,\ldots, n)$ so that with $\sX=\sY^0$, $\sY^{i-1}\xrightarrow{R_i}\sY^i$ for $i=1,2,3,\ldots, n$, and such that $\sY^n$ is uniformly bounded, see \cite{DZ}. The metric space $X$ will be said to have sFDC if the family $\{X\}$ does. It is clear that by restricting the families, this property can be seen to pass to subsets.

A finitely generated group with generating set $S=S^{-1}$ can be endowed with a left-invariant metric called the {\em word metric} by taking $d_S(g,h)=\|g^{-1}h\|_S$, where the norm $\|\gamma\|_S$ is zero at the identity and otherwise is the length of a shortest $S$-word that presents $\gamma$. It is easy to see that if $S$ and $S'$ are finite generating sets on the finitely generated group $\G$, then the metric spaces $(\G,d_S)$ and $(\G,d_{S'})$ are quasi-isometric. 

The situation for non-finitely generated groups is less clear. Ideally, one would like to endow any countable group with a metric structure that is an invariant of coarse isometry. Smith showed that on a countable group any two left-invariant, proper metrics are coarsely equivalent  \cite{Smi}. Moreover he shows that a weight function (defined below) on a countable group induces a left-invariant, proper metric. By a weight function on a generating set $S=S^{-1}$ for a group, we mean a function $w:S\to \overline{\RR}_+$ for which 
\begin{enumerate}
	\item if $w(s)=0$ then $s$ is the group identity $e$;
	\item $w(s)=w(s^{-1})$; and 
	\item for each $N\in\NN$, $w^{-1}([0,N])$ is finite.
\end{enumerate}

One then defines a norm by $\|\gamma\|=\inf\{\sum w(s_i)\mid x=s_1s_2\cdots s_n\}$, where the norm of the identity is defined to be $0$ (i.e. it is presented by the empty product).

Let $\G$ be an undirected graph without loops or multiple edges. Let $V(\Gamma)$ and $E(\G)$ be the set of vertices and edges of $\G$, respectively. Suppose that $\fG=\{G_v\}$ is a collection of groups indexed by the elements of $V(\G)$. The graph product $\G\fG$ of the collection $\fG$ over the graph $\G$ is defined to be the free product of the $G_v$ with the additional relations that whenever $\{v,v'\}$ is an edge in $\G$, then $gg'=g'g$ for all $g\in G_v$ and $g'\in G_{v'}$. Thus, if $E(\Gamma)=\emptyset$, then $\G\fG$ is the free product of the groups $G_v$, i.e. $\ast G_v$. If $\Gamma$ is the complete graph on $n$ vertices, we obtain the direct product $\G\fG=G_{v_1}\times\cdots\times G_{v_n}$. Graph products were introduced in \cite{Green}.

We will often refer to a word in a graph product (or free product) as being expressed in {\em syllables}. We say that the word $g_1\cdots g_\ell$ is an expression of $g$ in syllables if each  $g_i$ is a reduced, nontrivial word in some single vertex group, and no two consecutive $g_i$ and $g_{i+1}$ belong to the same vertex group.

Let $\G$ be a countable graph and let $\fG=\{G_v\}_{v\in V(\G)}$ be a collection of countable groups indexed by the vertices of $\G$. We may endow $\G\fG$ with a proper left-invariant metric by choosing generating sets $S_v$ for each group and assigning a weight function $w:\sqcup S_v\to \mathbb{N}$.
For each $r\in \mathbb{N}$, define a graph $\G_r$ by taking the collection of vertices $v\in V(\G_r)$ to be precisely those vertices $v$ for which some element of $S_v$ is assigned a weight $\le r$. Thus, outside of this vertex set, all weights exceed $r$. An edge connects two vertices of $\G_r$ if and only if there is an edge in $\G$ connecting the corresponding vertices of $\G$.

We will say that an element $x\in \G\fG$ is $r$-\textit{permissible} (or simply \textit{permissible} when $r$ is understood) if no reduced word presenting the element $x$ can be made to end with a non-trivial element of a group $G_v$ with $v\in\G_r$. Thus, the group identity is $r$-permissible for all $r$. By way of notation, let $\G_r\fG$ denote those element of $\G\fG$ that can be expressed in terms of $G_v$ with $v\in\G_r$.

Finally, for a graph $\G$, we recall that the {\em clique number} $\omega(\G)$ is the maximum number of vertices in a clique in $\G$; i.e., the size of the largest set of vertices for which each pair is connected by an edge in $\G$. 

\section{Asymptotic dimension of graph products} 

In this section we extend the result of Antol\'in and Dreesen concerning asymptotic dimension of graph products of groups in two directions. First, we extend the asymptotic dimension result to include certain infinite graphs. Second, we show that one can replace finite asymptotic dimension everywhere with property A and arrive at the corresponding conclusion. An anonymous referee suggested an alternate approach to ours: one can apply the result of Antol\'in and Dreesen \cite{An-Dre} and a result of Dranishnikov and Smith \cite{Dr-Smi}, which shows that the asymptotic dimension of a countable group is the supremum of the asymptotic dimensions of its finitely generated subgroups and then use the fact that a graph product on a countable graph is obtained as a union over subgraphs. Instead, we pursue a more elementary approach.

\begin{lemma}\label{Lemma3.1} Let $\G \fG$ be a graph product of countable groups $\fG=\{G_v\}$ over a countable graph $\G$ with a proper metric given by a weight function as described above. Let $r>0$ be given and take $\G_r$ as above. Then, each element of $\G \fG$ can be written in the form $xb$, where $x$ is permissible and $b\in \G_r\fG$. Moreover, if $x\neq x'$ are permissible, then $d(xb,x'b')>r$.
\end{lemma}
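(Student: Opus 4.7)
The plan is to split the argument into existence of the decomposition and the separation estimate, with graph-product reducedness as the main technical input.

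For existence, given $g\in\G\fG$, I would take a $\G_r$-standard form $g=g_1\cdots g_k$ and let $m$ be the largest index at which a non-$\G_r\fG$ syllable occurs (with $m=0$ if $g\in\G_r\fG$). Setting $x=g_1\cdots g_m$ and $b=g_{m+1}\cdots g_k\in\G_r\fG$ produces the required decomposition. To see that $x$ is permissible, suppose for contradiction that $x$ admits a reduced syllable expression $h_1\cdots h_m$ with $h_m\in\G_r\fG$. The concatenation $h_1\cdots h_m g_{m+1}\cdots g_k$ is a length-$k$ expression for $g$, and by the invariance of syllable length for graph products it must already be reduced (any combination would produce a presentation of $g$ of length $<k$). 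However its last non-$\G_r\fG$ syllable sits at a position strictly less than $m$, contradicting the defining maximality property of the $\G_r$-standard form.

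For the separation estimate, the key arithmetic observation is that the generators of weight at most $r$ are exactly those lying in vertex groups indexed by $V(\G_r)$, so every element of $\G\fG$ of norm $\le r$ lies in $\G_r\fG$. Thus $d(xb,x'b')\le r$ forces $b^{-1}(x^{-1}x')b'\in\G_r\fG$, and since $b,b'\in\G_r\fG$ this gives $\gamma:=x^{-1}x'\in\G_r\fG$. The task reduces to showing that when $x$ and $x\gamma$ are both permissible with $\gamma\in\G_r\fG$ one has $\gamma=e$. Assuming $\gamma\neq e$, write $x=g_1\cdots g_m$ in standard form (so $g_m\notin\G_r\fG$) and $\gamma=\g_1\cdots\g_p$ in reduced syllable form with each $\g_j\in\G_r\fG$ and $p\ge 1$. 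The candidate expression $g_1\cdots g_m\g_1\cdots\g_p$ presents $x\gamma$ and ends in $\g_p\in\G_r\fG$, so if it is reduced it directly contradicts the permissibility of $x\gamma$.

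The main technical obstacle is precisely this reducedness claim. A failure would require some $g_i\in\G_r\fG$ (necessarily with $i<m$, since $g_m\notin\G_r\fG$ and each $\g_j$ belongs to a $\G_r$-vertex group) and some $\g_j$ to lie in the same vertex group and to be separated only by syllables that commute with them; in particular $g_i$ would commute with each of $g_{i+1},\ldots,g_m$. But applying the same commutations inside the reduced expression $g_1\cdots g_m$ alone slides $g_i$ to the rightmost position, producing a reduced syllable expression of $x$ ending in the non-trivial $\G_r\fG$-syllable $g_i$, which contradicts the permissibility of $x$. Once this obstacle is handled both halves of the lemma follow; the underlying principle throughout is the invariance of graph-product syllable length together with the fact that any two reduced syllable expressions of the same element differ only by commutation moves.
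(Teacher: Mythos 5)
Your proof is correct, and on the existence half it takes a genuinely different route from the paper. The paper obtains the decomposition $g=xb$ by induction on syllable length, commuting the final syllable leftward past $\G_r\fG$-syllables and invoking the inductive hypothesis; you instead cut an extremal reduced expression at its last syllable outside $\G_r\fG$ and derive permissibility of the prefix by splicing in a hypothetical bad expression of $x$ and comparing against extremality. Your version is cleaner and makes explicit the two facts actually doing the work (invariance of syllable length under reduction, and the shuffle description of reduced words), at the cost of also silently using that every syllable of a reduced expression of an element of $\G_r\fG$ lies in a $\G_r$-vertex group (well-definedness of support) --- a fact the paper's argument needs implicitly as well. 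On the separation half your argument is essentially the paper's (show $d(xb,x'b')\le r$ forces $x^{-1}x'\in\G_r\fG$ and contradict permissibility), but you supply the reducedness of the concatenation of the reduced forms of $x$ and $\gamma$, which the paper glosses over; since that is exactly the point where permissibility of $x$ is used, filling it in is a genuine improvement.

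One caveat: the paper's displayed definition of $\G_r$-standard form asks that $\max\{i\mid g_i\notin\G_r\fG\}$ be $\ge$ the corresponding quantity for every other reduced presentation, and under that literal reading your spliced presentation (whose last non-$\G_r\fG$ syllable sits strictly left of position $m$) produces no contradiction. Your argument requires the standard form to \emph{minimize} that quantity, which is what the paper's prose gloss (``each $\G_r$ syllable is commuted as far to the right as possible'') and its subsequent use of standard forms clearly intend; the displayed inequality appears to be reversed. So your logic is sound for the intended definition, but you should state the extremal property you are invoking explicitly rather than calling it the ``maximality property.''
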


\begin{proof} First, we check that each element has such a form. To this end, let $g\in \G\fG$ be given and write $g=g_1\cdots g_t$ as an expression in syllables. We proceed by induction on the number of syllables $t$. If $t=1$, then either $g_1$ is in $\G_r\fG$ or not. In the first case, it can be written as $xg_1$, where $x=e$. In the latter case, $x=g_1$ is permissible.

Suppose now that every word of syllable length at most $t-1$ can be written in the form $xb$ with $x$ permissible and $b\in\G_r\fG$. Then, consider $g=g_1\cdots g_t$. Since $g_1\cdots g_{t-1}$ has syllable length shorter than $t$ it can be written in the form $xb$. Therefore, express $x$ and $b$ in syllables so that we have $g=x_1\cdots x_p b_{p+1}\cdots b_{t-1}g_t$. If $g_t$ itself is in $\G_r\fG$, then this word is already in permissible form. 

Suppose therefore, that $g_t\notin \G_r\fG$. If it commutes with $b_{t-1}$, then we can write $b_{t-1}g_t=g_tb_{t-1}$ and therefore we have $g=x_1\cdots b_{t-1}g_t=x_1\cdots g_tb_{t-1}$. Now, since its length is less than $t$, the element $x_1\cdots g_t$ can be written as some $x'b'$ in permissible form. But, then $g=x'b'b_{t-1}$ is a permissible presentation of $g$. 

Finally, we consider the case in which $g_t$ does not commute with $b_{t-1}$. If any rearrangement of this word allows $g_t$ to commute past a syllable, then we apply the argument of the preceding paragraph to obtain a word in permissible form. Otherwise, $x=g$ is already permissible. 

Now, we show the disjointness condition holds. Suppose that $x$ and $x'$ are distinct, but permissible. Then, write $x^{-1}x'=z$ for some $z\in \G \fG$. Observe that $z\notin \G_r\fG$, as, if it were, then $xz$ would be a presentation of $x'$ that ends with a non-trivial element of $\G_r\fG$, which is not allowed. Thus, $z$ must contain some element that is not in $\G_r\fG$. Hence it contains a generator $s$ from a group with weight $>r$. Thus, $d(xb,x'b')=\|b^{-1}zb'\|\ge\|s\|>r$.
\end{proof}

\begin{theorem}\cite[Theorem 6.3]{An-Dre} Let $\Gamma$ be a finite simplicial graph and let $\fG$ be a family of finitely generated groups indexed by vertices of $V(\G)$. Let $G=\G\fG$. Let $\mathcal{C}$ be the collection of subsets of $V(\G)$ spanning a complete graph. Then
\[\as G\le \max_{C\in\mathcal{C}}\sum_{v\in C}\max(1,\as G_v).\]
\end{theorem}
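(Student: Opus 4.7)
My plan is to induct on the number of vertices $n = |V(\G)|$. When $n=1$ the graph product is just $G_v$, the only clique is $\{v\}$, and the bound $\as G_v \le \max(1,\as G_v)$ is immediate. For the inductive step with $n > 1$, I would pick a vertex $v \in V(\G)$ and use the standard visual splitting of the graph product as an amalgamated free product
\[\G\fG \;=\; \bigl(G_v \times \G\fG_{\lk(v)}\bigr) *_{\G\fG_{\lk(v)}} \G\fG_{\G \setminus \{v\}},\]
where $\G\fG_W$ denotes the graph subproduct over the induced subgraph on $W \subseteq V(\G)$. This decomposition, going back to Green, is available because $G_v$ commutes with every $G_u$ for $u \in \lk(v)$, so the star of $v$ contributes a direct factor $G_v \times \G\fG_{\lk(v)}$ which is amalgamated over $\G\fG_{\lk(v)}$ with the graph product over the complementary induced subgraph. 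I would then estimate $\as\G\fG$ using Dranishnikov's amalgam bound $\as(A *_C B) \le \max\{\as A,\ \as B,\ \as C + 1\}$ together with the standard product bound $\as(A \times B) \le \as A + \as B$.

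The key combinatorial observation is that if $C'$ is any clique in $\lk(v)$, then $\{v\} \cup C'$ is a clique in $\G$, since $v$ is joined to every vertex of its link. Writing $M(\G')$ for the right-hand side of the theorem evaluated on an induced subgraph $\G'$, the inductive hypothesis applied to $\lk(v)$ and $\G\setminus\{v\}$ yields
\[\as\bigl(G_v \times \G\fG_{\lk(v)}\bigr) \;\le\; \as G_v + M(\lk(v)) \;\le\; \max(1,\as G_v) + M(\lk(v)),\]
and this last quantity is at most $M(\G)$ because every clique $C'$ in $\lk(v)$ enlarges to the clique $\{v\} \cup C'$ in $\G$. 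Similarly $\as\G\fG_{\G\setminus\{v\}} \le M(\G\setminus\{v\}) \le M(\G)$, and $\as\G\fG_{\lk(v)} + 1 \le M(\lk(v)) + 1 \le \max(1,\as G_v) + M(\lk(v)) \le M(\G)$ since $\max(1,\as G_v) \ge 1$. All three terms of Dranishnikov's amalgam bound are thus dominated by $M(\G)$, completing the inductive step.

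The main subtlety is bookkeeping the ``$+1$'' appearing in the amalgam formula: it must be absorbed into the $\max(1,\as G_v)$ factor associated with $v$ rather than being paid on top of the sum over the link clique. This is precisely why the theorem features $\max(1,\as G_v)$ in place of $\as G_v$, and is the only reason the bound does not blow up when some vertex group is finite. A secondary point worth double-checking is that the visual splitting genuinely realizes $\G\fG$ as an amalgamated product in the sense required by Dranishnikov's bound; this is essentially a normal form argument for graph products, and should follow from the same syllable techniques used in the preceding lemma.
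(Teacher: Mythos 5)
Your proof is correct and takes essentially the same route as the source the paper cites for this statement (Antol\'in--Dreesen), which is also exactly the template the paper reuses for its property~A analogue: induction on $|V(\G)|$ via Green's visual splitting $\G\fG = \bigl(G_v\times \G\fG_{\lk(v)}\bigr) *_{\G\fG_{\lk(v)}} \G\fG_{\G\setminus\{v\}}$, Dranishnikov's amalgam bound, the product bound, and the observation that the $+1$ is absorbed into $\max(1,\as G_v)$. The paper itself only quotes this theorem without proof, so there is nothing further to compare; the one point you gloss over (the degenerate case where $v$ is joined to every other vertex, so the amalgam collapses to $G_v\times \G\fG_{\lk(v)}$) does not affect the validity of your estimates.
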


For our present purposes, we need a slightly weaker result that we state as a corollary.

\begin{corollary}\label{Cor3.3} Let $\Gamma$ be a finite simplicial graph with $\omega(\G)\le k$ and let $\fG$ be a collection of finitely generated groups indexed by $v\in V(\G)$ such that $0<\as G_v\le n$ for all $v\in V(\G)$. Then, $\as \G\fG\le nk$. 
\end{corollary}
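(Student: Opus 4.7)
The plan is to derive this as an immediate arithmetic consequence of the preceding Antolín--Dreesen theorem, which bounds
\[\as(\G\fG) \le \max_{C\in\mathcal{C}}\sum_{v\in C}\max(1,\as G_v),\]
where $\mathcal{C}$ is the collection of complete subgraphs of $\G$. All that remains is to estimate the right-hand side using the two hypotheses of the corollary.

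First I would dispose of the $\max(1,\cdot)$ in each summand. Since $\as G_v$ is a (non-negative) integer and we are assuming $\as G_v > 0$, we must have $\as G_v \ge 1$, so $\max(1,\as G_v)=\as G_v$. Combining this with the hypothesis $\as G_v \le n$ yields the uniform bound $\max(1,\as G_v) \le n$ for every vertex $v$.

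Next, I would use the clique-number hypothesis: any $C \in \mathcal{C}$ is a clique of $\G$, hence $|C|\le \omega(\G)\le k$. Therefore for each $C\in\mathcal{C}$,
\[\sum_{v\in C}\max(1,\as G_v)\le |C|\cdot n\le kn,\]
and taking the maximum over $\mathcal{C}$ gives $\as(\G\fG)\le kn$. There is no real obstacle here; the argument is a routine numerical bound, and the only subtlety worth mentioning is the use of positivity of $\as G_v$ to dispense with the $\max(1,\cdot)$ term.
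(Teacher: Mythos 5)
Your proposal is correct and follows essentially the same route as the paper: both arguments use the positivity hypothesis to replace $\max(1,\as G_v)$ with $\as G_v\le n$ and then bound the sum over any clique $C$ by $|C|\cdot n\le kn$ via the clique-number hypothesis. No issues.
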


\begin{proof} We have that $\max(1,\as G_v)=\as G_v$ for each $v$. Also, there is at least one $C\in\mathcal{C}$ with $\omega(\Gamma)$ elements. Thus, 
\[\as G\le \max_{C\in\mathcal{C}}\sum_{v\in C}\max(1,\as G_v)\le\omega(\G)\max_{v\in V(\G)}\{\as G_v\}\le kn.\]
\end{proof}

Really, all that is necessary for the preceding proof to work is that at least one of the $G_v$ should be infinite, forcing $n>0$. If all $G_v$ are finite, then $\as G\le k$ instead of the estimate given above, which would be $0=nk$. 

\begin{theorem} \label{Asdim-Infinite-Graphs} Let $\Gamma$ be a countable graph with clique number $\omega(\G)\le k$. Suppose that $\{G_v\}_{v\in V(\G)}$ is a collection of countable groups (in proper metrics) with $0<\as G_v\le n$ for all $v\in V(\G)$. Then, in a left-invariant proper metric, $\as \G\fG\le nk.$
\end{theorem}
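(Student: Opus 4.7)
The plan is to reduce the infinite-graph case to the finite-graph case (Corollary \ref{Cor3.3}) using the preceding lemma. Fix any large $R>0$ and choose $r>R$. Because $\bar{w}:V(\G)\to\NN$ is a bijection, the set $\bar{w}^{-1}([0,r])$ is finite, so $\G_r$ is a \emph{finite} induced subgraph of $\G$ with $\omega(\G_r)\le \omega(\G)\le k$. Since each $G_v$ with $v\in V(\G_r)$ still satisfies $0<\as G_v\le n$, Corollary \ref{Cor3.3} gives $\as(\G_r\fG)\le nk$. In particular, there exist uniformly bounded, $R$-disjoint families $\sU^{0},\ldots,\sU^{nk}$ of subsets of $\G_r\fG$ whose union covers $\G_r\fG$.

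Next I would transfer this cover to all of $\G\fG$ via the decomposition provided by the lemma. Let $P\subset \G\fG$ be the set of permissible elements. The lemma asserts that every $g\in\G\fG$ has the form $g=xb$ with $x\in P$ and $b\in\G_r\fG$, and that for distinct $x,x'\in P$ and arbitrary $b,b'\in\G_r\fG$ one has $d(xb,x'b')>r$. Hence the left cosets $\{xH : x\in P\}$, where $H=\G_r\fG$, partition $\G\fG$ and are pairwise $r$-disjoint. Because the metric on $\G\fG$ is left-invariant, for each $x\in P$ the family $x\sU^{i}:=\{xU : U\in \sU^{i}\}$ is uniformly bounded with the same diameter bound as $\sU^{i}$ and is still $R$-disjoint within $xH$. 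Defining
\[\sV^{i}=\bigcup_{x\in P} x\sU^{i}\qquad (i=0,1,\ldots,nk),\]
two sets of $\sV^{i}$ lying in the same coset are $R$-disjoint by construction, and two lying in different cosets $xH,x'H$ are at distance greater than $r>R$ by the lemma. Thus each $\sV^{i}$ is uniformly bounded and $R$-disjoint, and $\bigcup_{i}\sV^{i}$ covers $\G\fG$, yielding $\as\G\fG\le nk$.

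The main obstacle is verifying that Corollary \ref{Cor3.3} can indeed be applied to $\G_r\fG$ with the metric it inherits as a subspace of $\G\fG$. This needs the observation that the canonical retraction $\pi:\G\fG\to \G_r\fG$ that kills generators outside $\G_r$ is length non-increasing on the weighted word metric; consequently, the word length of any $h\in\G_r\fG$ computed in $\G\fG$ equals the word length of $h$ computed inside $\G_r\fG$. Since the weights on generators in $\G_r$ lie in the finite range $[1,r]$, this weighted metric is bi-Lipschitz to the ordinary word metric on the finitely generated group $\G_r\fG$, and so the asymptotic dimension bound of Corollary \ref{Cor3.3} is valid in either metric. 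Modulo this subspace-metric check, the rest of the argument is a direct packaging of the lemma with Corollary \ref{Cor3.3} and left-invariance.
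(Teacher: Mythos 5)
Your proposal is correct and follows essentially the same route as the paper: apply Corollary \ref{Cor3.3} to the finite subgraph $\G_r$, then translate the resulting cover of $\G_r\fG$ by the permissible elements, using the lemma for both the coset decomposition and the $r$-separation of distinct translates. Your extra observation that the retraction onto $\G_r\fG$ is length non-increasing (so the subspace and intrinsic metrics on $\G_r\fG$ agree, up to the bi-Lipschitz change from weighted to unweighted word metric) is a detail the paper leaves implicit, and it is handled correctly.
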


\begin{proof} For a given $r>0$ we will construct a cover by $nk+1$ uniformly bounded, $r$-disjoint families of subsets of $\G\fG$. Since $\G\fG$ is a countable group that is not finitely generated, we endow it with a metric arising from a weight function $\bar{w}:V(\G)\to\NN$ as described above. 

Define a subgraph $\G_r$ of $\G$ by setting $V(\G_r)=\bar{w}^{-1}([0,r])$ and by defining an edge between two vertices of $\G_r$ if and only if there is an edge between these vertices in $\G$. By Corollary \ref{Cor3.3}, we know that $\as\G_r\fG\le nk$. Thus, there is a cover by $nk+1$ $r$-disjoint families of uniformly bounded sets, say $\sU^0,\sU^1,\ldots, \sU^{nk}$. Let $P\subset \G\fG$ denote the set of all $\G_r$-permissible elements. 

For each $i$ define the collection $\{xU\mid x\in P, U\in\sU^i\}$. We claim that for each $i$, the collection is $r$-disjoint and uniformly bounded. Moreover, we claim that the union of these collections covers $\G\fG$. 

Let $x\in P$. Since the metric on $\G\fG$ is left-invariant, we know that $d(xu,xu')=d(u,u')$, for all $xu$ and $xu'$ in $xU$. Since $\diam(U)$ is uniformly bounded, we have that $\diam(xU)$ is also uniformly bounded.

Next, suppose that $xU$ and $x'U'$ are distinct sets, where $x,x'\in P$ and  $U, U'\in\sU^i$. If $x=x'$, then we have $d(xU,x'U')=d(xU,xU')=d(U,U')$, and since these sets must be different (yet still in the same family $\sU^i$), they are at least $r$-disjoint. If $x\neq x'$, then by Lemma \ref{Lemma3.1} $d(xu,x'u')>r$ and so these two families are $r$-disjoint. 

Finally, we show that the collection of all such families covers $\G\fG$. To this end, let $g\in\G\fG$ be given.  Then, by Lemma \ref{Lemma3.1} $g=xb$, where $x\in P$ and $b\in \G_r\fG$. Thus, there is some $i$ and some $U\in\sU^i$ so that $b\in U$. Thus, $g\in xU$, as required. 
\end{proof}

The following result and proof follow are similar to \cite[Theorem 6.3]{An-Dre}. 

\begin{proposition}\label{Cor-PropA} Let $\G$ be a finite graph and let $\fG=\{G_v\}_{v\in V(\G)}$ be a collection of countable groups with proper left-invariant metrics. If all the $G_v$ have property A, then $\G\fG$ has property A.
\end{proposition}
\begin{proof}
We proceed by induction on $|V(\Gamma )|$.  We note that if $|V(\Gamma )| = 1$, then $\G\fG = G_v$ which is assumed to have property A.

Now we suppose that $|V(\Gamma )| = n > 1$ and also that the theorem holds for graphs with fewer than $n$ vertices.

Then let $v\in V(\Gamma )$ be any vertex, and put $A = \{v\} \cup \lk(v), B = \Gamma - \{v\}, C = \lk(v)$.  Then, by Green \cite{Green} we have that $\G\fG = G_A\ast_{G_C}G_B$.

Now, we have two cases.  In the first case, $A = \Gamma$.  Then, since $v$ is connected to each vertex of $C$ and this encompasses all vertices of $\Gamma$, we have that $\G\fG = G_v\times G_C$. Now $G_v$ has property A by assumption. Since $|V(C)| < |V(\Gamma )|$ the induction hypothesis implies that $G_C$ has property A.  Since property A is preserved by direct products (by \cite{Yu00}), $\G\fG$ has property A.

In the second case, where $A \neq \Gamma$, we have then that $|V(A)| < |V(\Gamma )|$.  By definition, we have that $|V(B)| < |V(\Gamma )|$.  And so, by our induction hypothesis, $G_A$ and $G_B$ both have property A.  Since amalgamated free products preserve property A (see \cite{Dye,Tu,Be}), we conclude that $\G\fG$ has property A. 
\end{proof}

We end this section with an open problem, which we phrase as a conjecture. The techniques that we use in this paper cannot easily be applied in this situation.

\begin{conjecture} Let $\Gamma$ be a countably infinite graph and suppose that all $G_v\in\fG$ have property A. Then in a proper, left-invariant metric, $\G\fG$ has property A.
\end{conjecture}

\section{Asymptotic Property C}

The goal of this section is to show that asymptotic property C is preserved by some infinite unions. 

 We consider the case where $X$ can be expressed as a union of a collection of spaces with uniform asymptotic property C with the additional property that for each $r>0$ there is a ``core'' space such that removing this core from the families leaves the families $r$-disjoint. We begin by stating some results from \cite{BD1}.

Let $\sU$ and $\sV$ be families of subsets of a metric space $X$. Let $V\in\sV$ and $d>0$. Let $N_d(V;\sU)$ be the union of $V$ and the set of all $U\in\sU$ such that $d(V,U)\le d$. The {\em $d$-saturated union of $\sV$ in $\sU$} is the set $\sV\cup_d\sU=\{N_d(V;\sU)\mid V\in\sV\}\cup\{U\in\sU\mid d(V,U)>d\ \forall V\in\sV\}$. Note that (in general) $\sV\cup_d\sU\neq\sU\cup_d\sV$ and that $\emptyset\cup_d\sU=\sU=\sU\cup_d\emptyset$.

\begin{proposition}\cite[Proposition 2]{BD1} Assume that $\sU$ is a collection of subsets of a metric space $X$ that is $d$-disjoint and $R$-bounded, with $R\ge d$. Assume that $\sV$ is a collection of subsets that is $5R$-disjoint and uniformly bounded. Then, $\sV\cup_d\sU$ is $d$-disjoint and uniformly bounded.
\end{proposition}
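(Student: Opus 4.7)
The plan is to analyze separately the two kinds of sets appearing in $\sV\cup_d\sU$: the \emph{saturated blocks} $W_V := V\cup\bigcup\{U\in\sU\mid d(V,U)\le d\}$, one for each $V\in\sV$, and the \emph{leftover} sets, which are those $U\in\sU$ with $d(V,U)>d$ for every $V\in\sV$. For uniform boundedness I would estimate $\diam(W_V)$ by routing through a witness point in $V$: each $U\subset W_V$ has diameter at most $R$ and lies within $d$ of $V$, while $V$ itself has diameter bounded by the uniform bound on $\sV$, giving $\diam(W_V)\le \diam(V)+2R+2d$; leftover sets have diameter $\le R$. Both kinds are uniformly bounded.

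For $d$-disjointness I would run a three-case analysis. Two leftover sets are $d$-disjoint directly by the hypothesis on $\sU$. For a leftover $U$ versus a saturated $W_V$: one has $d(U,V)>d$ by the leftover property, and any $U'\subset W_V$ satisfies $d(V,U')\le d$, so $U'\neq U$ and hence $d(U,U')>d$ by the $d$-disjointness of $\sU$; together these give $d(U,W_V)>d$. For two distinct saturated blocks $W_V$ and $W_{V'}$ the key estimate is the triangle-inequality chain
\[d(V,V')\le d(V,U)+\diam(U)+d(U,U')+\diam(U')+d(U',V'),\]
valid for any $U\subset W_V$ and $U'\subset W_{V'}$; bounding each saturation gap by $d$ and each diameter by $R$ yields
\[d(U,U')\ge d(V,V')-2R-2d>5R-2R-2d=3R-2d\ge d,\]
using $R\ge d$. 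The mixed subcases (a point of $V$ against a point in some $U'\subset W_{V'}$, or $V$ against $V'$ directly) follow from shorter versions of the same chain and are easier.

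The step I expect to be the main obstacle is the last case, where the constant $5R$ must absorb two diameter slacks of size $R$ and two saturation slacks of size $d$ while still leaving a strict inequality $>d$. The hypothesis $R\ge d$ is exactly what turns $3R-2d$ into a quantity at least $d$, and the strict inequality $d(V,V')>5R$ is what propagates to the desired strict $d(W_V,W_{V'})>d$. As a preliminary consistency check I would note that no single $U\in\sU$ can lie in both $W_V$ and $W_{V'}$ for $V\neq V'$: otherwise $d(V,V')\le 2d+R\le 3R<5R$, contradicting the $5R$-disjointness of $\sV$, so the three-case trichotomy is genuinely exhaustive.
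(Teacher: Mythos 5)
Your proof is correct, and it is essentially the standard direct argument for this fact (the paper itself only cites \cite{BD1} for it and gives no proof): the triangle-inequality chain $d(U,U')\ge d(V,V')-2R-2d>5R-2R-2d\ge d$ for the saturated blocks, together with the observation that a leftover $U$ is $d$-far from both $V$ and every $U'$ absorbed into $W_V$, is exactly the intended computation. The diameter estimate $\diam(W_V)\le\diam(V)+2R+2d$ and the consistency check that no $U$ can be absorbed by two distinct $V,V'$ are both right as well.
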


Let $\{X_\alpha\}_{\alpha}$ be a family of metric spaces. We will say that the family $X_\alpha$ satisfies asymptotic property C {\em uniformly in} $\alpha$ if for every sequence $R_1<R_2<\cdots$ there exists an $n$ and $B_1<B_2<\cdots<B_n$ so that for each $\alpha$ there exist families $\sU^i_\alpha$ of $R_i$-disjoint, $B_i$-bounded families ($i=1,\ldots,n$) so that $\cup_{i=1}^n\sU^i_\alpha$ covers $X_\alpha$.

\begin{theorem} \label{C-Union} Suppose that $X=\cup_\alpha X_\alpha$ is a countable union of spaces that have uniform asymptotic property C. Suppose further that for each $r>0$ there is a $Y_r\subset X$ so that $Y_r$ has asymptotic property C and such that the family $\{X_\alpha-Y_r\}_\alpha$ is $r$-disjoint. Then, $X$ has asymptotic property C.
\end{theorem}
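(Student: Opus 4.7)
The plan is to split $X$ into two pieces using the ``core'' $Y_r$ of the hypothesis. Outside $Y_r$, the pieces $X_\alpha - Y_r$ are $r$-disjoint, so I can glue the uniform property C covers of the $X_\alpha$'s into single families without destroying $R_i$-disjointness. Inside $Y_r$, I use the asymptotic property C of $Y_r$ directly, against a tail of the originally given sequence.

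In detail, given $R_1 \le R_2 \le \cdots$, first apply the uniform asymptotic property C of $\{X_\alpha\}$ to this sequence, obtaining an integer $n$, bounds $B_1, \ldots, B_n$, and for each $\alpha$ families $\sU^1_\alpha, \ldots, \sU^n_\alpha$ such that $\sU^i_\alpha$ is $R_i$-disjoint, $B_i$-bounded, and $\bigcup_i \sU^i_\alpha$ covers $X_\alpha$. Pick $r \ge R_n$ and use the hypothesis to select $Y_r$ so that $\{X_\alpha - Y_r\}_\alpha$ is $r$-disjoint. For each $i \le n$, define
\[\sV^i \;=\; \bigcup_\alpha \{\, U \setminus Y_r : U \in \sU^i_\alpha,\ U \setminus Y_r \neq \emptyset\,\}.\]
Two distinct members of $\sV^i$ arising from the same $X_\alpha$ are $R_i$-disjoint because their parents are; two members arising from different $X_\alpha$ and $X_{\alpha'}$ lie inside $X_\alpha - Y_r$ and $X_{\alpha'} - Y_r$, hence are $r$-disjoint and thus $R_i$-disjoint since $r \ge R_n \ge R_i$. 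Each $\sV^i$ is $B_i$-bounded, and together $\sV^1, \ldots, \sV^n$ cover $X - Y_r$. To handle the core, apply the asymptotic property C of $Y_r$ to the shifted sequence $R_{n+1} \le R_{n+2} \le \cdots$, obtaining families $\sW^1, \ldots, \sW^m$ with $\sW^j$ being $R_{n+j}$-disjoint, uniformly bounded, and collectively covering $Y_r$. Concatenating $\sV^1, \ldots, \sV^n, \sW^1, \ldots, \sW^m$ and relabelling as $\sU^1, \ldots, \sU^{n+m}$ yields the data required by the definition of asymptotic property C for $X$.

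The main (and really only) technical point is to verify that the ``restriction-to-the-complement-of-$Y_r$'' operation promotes families that are only $R_i$-disjoint within each $X_\alpha$ into single families that are $R_i$-disjoint across all $\alpha$; the hypothesis that $\{X_\alpha - Y_r\}_\alpha$ is $r$-disjoint is exactly what makes this work, provided $r$ dominates $R_1, \ldots, R_n$. The two batches of families cover disjoint regions and impose no cross-interaction beyond the per-family $R_i$-disjointness, which is guaranteed by construction; no saturation argument of the flavor used in \cite{BD1} is required here, because the $r$-gap between $Y_r$ and the pieces $X_\alpha - Y_r$ is built into the hypothesis rather than arranged by enlarging sets.
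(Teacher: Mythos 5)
Your proof is correct, but it takes a different route from the paper's. The paper does not concatenate the two batches of families with a shifted tail of the sequence; instead it merges the cover of the core with the cover of the complement \emph{at the same index}, using the $d$-saturated union $\sV^i\cup_{d_i}\overline{\sU^i}$ of \cite{BD1}: it restricts the $\sU^i_\alpha$ to $X_\alpha-Y_r$ and glues across $\alpha$ exactly as you do (with $r=5R_n$ rather than $r\ge R_n$), but then it asks the cover $\sV^i$ of $Y_r$ to be $5R_i$-disjoint (where $R_i$ bounds the diameters in $\overline{\sU^i}$) so that Proposition 2 of \cite{BD1} guarantees that $\sW^i=\sV^i\cup_{d_i}\overline{\sU^i}$ is still $d_i$-disjoint and uniformly bounded. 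The payoff of the paper's approach is economy in the number of families --- it ends with $\max\{k,n\}$ families rather than $n+m$ --- which matters if one wants to track how many families are used, but is irrelevant to the definition of asymptotic property C as stated. Your approach buys simplicity: by covering $Y_r$ against the tail $R_{n+1}\le R_{n+2}\le\cdots$ and simply appending those families, you never have to worry about a set of $\sV^i$ sitting close to a set of $\overline{\sU^i}$, so the saturation proposition is not needed at all; the only disjointness issues are within each batch, and these are handled by the per-$\alpha$ disjointness together with the $r$-separation of the $X_\alpha-Y_r$, exactly as you argue. Both arguments are complete proofs of the theorem.
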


\begin{proof} Let $d_1<d_2<\cdots$ be a sequence of positive numbers. For each $\alpha$, choose families $\sU^i_\alpha$ of $d_i$-disjoint, $R_i$-bounded sets, $i=1,2,\ldots,n$. Since $R_i$ are upper bounds on diameters, we may take them to be increasing and insist that $R_i\ge d_i$. Put $r=5R_n$. Take $Y_r$ as in the statement of the theorem. 

Let $\sV^1,\sV^2,\ldots,\sV^k$ be $5R_i$-disjoint, $B_i$-bounded families of sets whose union covers $Y_r$.

Let $\overline{\sU_\alpha^i}$ denote the restriction of $\sU^i_\alpha$ to $X_\alpha-Y_r$. Next, put $\overline{\sU^i}=\cup_\alpha\sU^i_{\alpha}$. Note that $\overline{\sU^i}$ is $R_i$-bounded and $d_i$ disjoint. Finally, set $\sW^i=\sV^i\cup_{d_i}\overline{\sU^i}$, for $i=1,2,\ldots,\max\{k,n\}$. Here, we take $\sV^i=\emptyset$ or $\sU^i=\emptyset$ if $i>k$ or $i>n$, respectively. Thus, in these cases, we have $\sW^i=\overline{\sU^i}$ or $\sW^i=\sV^i$, respectively. By the above proposition, $\sW^i$ is $d_i$-disjoint and uniformly bounded. 

Finally, we show that the collection $\{\sW^i\}$ covers $X$. To this end, suppose that $x\in X$ is given. Suppose first that $x\in Y_r$. Then, since the collection $\{\sV^i\}_{i=1}^k$ covers $Y_r$, there is some $i_0$ so that $x\in V_0\in\sV^{i_0}$. Now, since $\sW^{i_0}$ contains the set $N_d(V_0;\overline{\sU}^{i_0})$, we see that every element of $V_0$ is in some set inside of $\sW^{i_0}$. Thus, in this case, $x$ is covered by some set in $\{\sW^i\}_{i=1}^{\max\{k,n\}}$.

Next, suppose that $x\notin Y_r$. Then, there is some $j_0$ and an $\alpha_0$ so that there is some $U_0\in\sU^{j_0}_{\alpha_0}$ that contains $x$. Either $d(U_0,\sV^{j_0})\le d$ or $d(U_0,\sV^{j_0})>d$. In the former case, we see that all elements of $U_0$ will be in some element of the type $N_d(V,\sU^{j_0}_{\alpha_0})$. Thus, this $x$ is in some element of $\{\sW^i\}$. In the latter case, $U_0$ is among the collection $\{U\in\overline{\sU}^{j_0}_{\alpha_0}\mid d(V,U)>d\ \forall\ V\in \sV^{j_0}\}$ and thus, is in some element of $\{\sW^i\}$.
\end{proof}

We end this section with some open problems.

\begin{question} Is asymptotic property C preserved by free products or amalgamated free products?
\end{question}

Note that in a recent preprint, Beckhardt \cite{Beck} has shown that a group acting by isometries on a metric space with asymptotic property C in such a way that the stabilizers have finite asymptotic dimension will have property C. This result is related to this question since the original proof that finite asymptotic dimension was preserved by amalgamated products used the action of the product on its Bass-Serre tree. This action is by isometries, the stabilizers have finite asymptotic dimension and the tree on which the space acts has finite asymptotic dimension.

\begin{question}
Is asymptotic property $C$ preserved by direct products?
\end{question}

If the answers to the previous two questions are both yes, then it would immediately follow that the following question also has a positive answer.

\begin{question} Let $\G$ be a finite graph. If all the $G_v$ have asymptotic property $C$, does $\fG\G$ have asymptotic property $C$?
\end{question}

If the answer to that question is yes, one could additionally ask the following.

\begin{question} Let $\Gamma$ be a countably infinite graph with bounded clique number. Suppose that all $G_v$ have asymptotic property $C$. Then, in a proper, left-invariant metric, does $G$ have asymptotic property $C$?
\end{question}
												
\section{Straight Finite Decomposition Complexity}

The goal of this section is to apply the techniques of Guentner, Tessera and Yu \cite{GTY1,GTY2} to the notion of straight finite decomposition complexity defined by Dranishnikov and Zarichnyi \cite{DZ}. It is shown there that sFDC is a coarse invariant, is preserved by finite unions, and is preserved by some infinite unions (analogous to our theorem above about property C). We extend these results to show that sFDC is preserved by fiberings and conclude that it is preserved by amalgamated products and graph products.

We begin by recalling some of the results from \cite{DZ}.

\begin{theorem} \cite[Theorem 3.1]{DZ} \label{DZ-ce} If $f:X\to Y$ is a coarse equivalence and if $Y$ has sFDC, then so does $X$.
\end{theorem}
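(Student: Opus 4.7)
The plan is to transport sFDC decompositions from $Y$ back to $X$ by taking preimages under the coarse equivalence $f$. Let $\rho_1, \rho_2:[0,\infty)\to[0,\infty)$ be the non-decreasing control functions of $f$, so that $\rho_1(d_X(x,x')) \le d_Y(f(x),f(x')) \le \rho_2(d_X(x,x'))$ with $\rho_1$ proper. Given a sequence $R_1 \le R_2 \le \cdots$ at which $X$ must be decomposed, I would first form the sequence $S_i := \rho_2(R_i)$ in $Y$, apply sFDC of $Y$ to this sequence to obtain an $n$ and families $\sY^1,\ldots,\sY^n$ with $Y \xrightarrow{S_1} \sY^1$, $\sY^{i-1} \xrightarrow{S_i} \sY^i$ for $2 \le i \le n$, and $\sY^n$ uniformly bounded, and then define the candidate families on the $X$-side by $\sX^i := \{f^{-1}(Z) : Z \in \sY^i\}$.

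The verification of $X \xrightarrow{R_1} \sX^1$ and of each $\sX^{i-1} \xrightarrow{R_i} \sX^i$ is routine. If $Y = Y^0 \cup Y^1$ is an $S_1$-decomposition with $Y^j$ an $S_1$-disjoint union $\bigsqcup_k Y^{jk}$ of sets in $\sY^1$, then $X = f^{-1}(Y^0)\cup f^{-1}(Y^1)$ and each $f^{-1}(Y^j) = \bigcup_k f^{-1}(Y^{jk})$ is automatically a disjoint union of sets in $\sX^1$. The $R_1$-separation of these preimages follows by the contrapositive: if $x \in f^{-1}(Y^{jk})$, $x' \in f^{-1}(Y^{jk'})$ and $d_X(x,x') \le R_1$, then $d_Y(f(x),f(x')) \le \rho_2(R_1) = S_1$, contradicting the $S_1$-separation of $Y^{jk}$ and $Y^{jk'}$. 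The same argument applied to each $Z \in \sY^{i-1}$ and its $\sY^i$-decomposition handles the intermediate steps. For the uniform bound on $\sX^n$, if $\diam(Z) \le D$ for every $Z \in \sY^n$ and $x,x' \in f^{-1}(Z)$, then $\rho_1(d_X(x,x')) \le d_Y(f(x),f(x')) \le D$; since $\rho_1$ is proper, the sublevel set $\rho_1^{-1}([0,D])$ is bounded, yielding a uniform diameter bound on all elements of $\sX^n$.

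The main point to watch is that $f$ need not be surjective, so one might worry whether preimages of a decomposition of $Y$ really cover $X$. This is actually a non-issue, because we decompose all of $Y$ rather than just $f(X)$, and hence $X = f^{-1}(Y)$ is covered automatically at every stage; in particular, one never needs to invoke a coarse inverse $g : Y \to X$. The only other minor subtlety is that $\rho_2$ need not be strictly increasing, but the contrapositive formulation above uses only monotonicity and so sidesteps any need to invert the control functions.
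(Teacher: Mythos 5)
Your proof is correct and follows essentially the same route as the paper's: set $S_i=\rho_2(R_i)$, pull back an sFDC decomposition chain of $Y$ at the scales $S_i$ via $f^{-1}$, and use effective properness to see that the final family of preimages is uniformly bounded. The only difference is that the paper delegates the two verification steps (that decompositions pull back and that preimages of bounded families are bounded) to Lemmas 3.1.1 and 3.1.2 of Guentner--Tessera--Yu, while you prove them inline.
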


We include a proof for the reader's convenience and also because we will use the same technique to prove our fibering theorem, Theorem \ref{Fibering}.

\begin{proof} Let $f:X\to Y$ be uniformly expansive and effectively proper. Suppose that $\rho:[0,\infty)\to [0,\infty)$ is an increasing function for which $d(f(x),f(x'))\le\rho(d(x,x'))$ for all $x$ and $x'$ in $X$. 

Let $R_1<R_2<\cdots$ be given and set $S_i=\rho(R_i)$ for each $i$. By way of notation, put $\{Y\}=\sV^0$. Then, since $Y$ has sFDC, there is some $m\in\NN$ and metric familes $\sV^1,\sV^2,\ldots,\sV^m$ so that $\sV^0\xrightarrow{S_1}\sV^1\xrightarrow{S_2}\sV^2\xrightarrow{S_3}\cdots\xrightarrow{S_m}\sV^m$ with $\sV^m$ bounded. According to \cite[Lemma 3.1.1]{GTY1}, if $\sV^{i-1}\xrightarrow{S_i}\sV^i$ then $f^{-1}(\sV^{i-1})\xrightarrow{R_i}f^{-1}(\sV^i)$. 

More explicitly, write $Y=V^1_0\cup V^1_1$, where \[V^1_i=\bigsqcup\limits_{\tiny S_1\hbox{-disjoint}}V^1_{ij},\] and $V^1_{ij}\in\sV^1$. Then $X=f^{-1}(Y)=f^{-1}(V^1_0)\cup f^{-1}(V^1_1)$, with 
\[f^{-1}(V^1_i)=\bigsqcup\limits_{\tiny R_1\hbox{-disjoint}}f^{-1}(V^1_{ij}).\] 

Then, for each $V\in\sV^1$, write $V=V^2_0\cup V^2_1$ where \[V^2_i=\bigsqcup\limits_{\tiny S_2\hbox{-disjoint}}V^2_{ij},\] and $V^2_{ij}\in\sV^2$. Then, as above, obtain an $R_2$-decomposition of $f^{-1}(\sV^1)$ over $f^{-1}(\sV^2)$. We continue in this way until we eventually find an $R_m$-decomposition of $f^{-1}(\sV^{m-1})$ over $f^{-1}(\sV^m)$. Since $f$ is effectively proper and $\sV^m$ is bounded, we apply \cite[Lemma 3.1.2]{GTY1} to conclude that $f^{-1}(\sV^m)$ is bounded, as required.
\end{proof}

Next, we obtain a version of \cite[Theorem 3.1.4]{GTY1} for straight finite decomposition complexity.

\begin{theorem}\label{Fibering} Let $X$ and $Y$ be metric spaces and let $f:X\to Y$ be a uniformly expansive map. Assume that $Y$ has sFDC and that for every bounded family $\sV$ in $Y$, the inverse image $f^{-1}(\sV)$ has sFDC. Then, $X$ has sFDC. 
\end{theorem}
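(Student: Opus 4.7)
The plan is to mimic the strategy of the previous theorem (pull back a decomposition of $Y$ through $f$ using uniform expansivity) and then concatenate with a decomposition supplied by the hypothesis on preimages of bounded families.

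\textbf{Step 1 (setup).} Let $\rho$ be a non-decreasing function witnessing that $f$ is uniformly expansive, so that $d(f(x),f(x')) \le \rho(d(x,x'))$. Given an arbitrary sequence $R_1 \le R_2 \le \cdots$ of positive numbers, I would set $S_i = \rho(R_i)$ for each $i$. Applying sFDC for $Y$ to the sequence $S_1 \le S_2 \le \cdots$ produces some $m$ and metric families $\sV^1, \sV^2, \ldots, \sV^m$ with
\[\{Y\} \xrightarrow{S_1} \sV^1 \xrightarrow{S_2} \sV^2 \xrightarrow{S_3} \cdots \xrightarrow{S_m} \sV^m\]
and $\sV^m$ uniformly bounded.

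\textbf{Step 2 (pullback through $f$).} Exactly as in the proof of the previous theorem, I would invoke \cite{GTY1}*{Lemma 3.1.1} repeatedly to lift each $S_i$-decomposition of $\sV^{i-1}$ over $\sV^i$ to an $R_i$-decomposition of $f^{-1}(\sV^{i-1})$ over $f^{-1}(\sV^i)$. Writing $\sU^i := f^{-1}(\sV^i)$ and $\sU^0 := \{X\}$, this yields
\[\{X\} \xrightarrow{R_1} \sU^1 \xrightarrow{R_2} \sU^2 \xrightarrow{R_3} \cdots \xrightarrow{R_m} \sU^m.\]
Note that I do \emph{not} apply \cite{GTY1}*{Lemma 3.1.2} here, because effective properness is not assumed; consequently $\sU^m$ need not be uniformly bounded even though $\sV^m$ is.

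\textbf{Step 3 (finishing with the fiber hypothesis).} Since $\sV^m$ is a bounded family in $Y$, the standing hypothesis ensures that $\sU^m = f^{-1}(\sV^m)$ has sFDC as a metric family. I would apply its sFDC to the tail sequence $R_{m+1} \le R_{m+2} \le \cdots$ to obtain some $n$ and families $\sW^1, \ldots, \sW^n$ with
\[\sU^m \xrightarrow{R_{m+1}} \sW^1 \xrightarrow{R_{m+2}} \sW^2 \xrightarrow{R_{m+3}} \cdots \xrightarrow{R_{m+n}} \sW^n\]
and $\sW^n$ uniformly bounded. Concatenating the two chains gives a straight decomposition of $\{X\}$ of length $m+n$ against the prescribed sequence $R_1 \le R_2 \le \cdots$, terminating in the uniformly bounded family $\sW^n$. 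This is precisely what sFDC of $X$ requires.

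\textbf{Main obstacle.} The only real point of care is bookkeeping the notion of sFDC for a metric family (as opposed to a single space): one must verify that Step 3 is legitimate, i.e.\ that ``$f^{-1}(\sV)$ has sFDC'' really lets us decompose all of its members simultaneously against a single sequence, producing a single uniformly bounded terminal family. This is the standard family-level formulation of sFDC used by Dranishnikov--Zarichnyi, and it is what allows the two chains in Steps 2 and 3 to be spliced. Apart from this, the argument is a direct translation of the coarse-equivalence proof above, with the failure of effective properness patched by the fiber hypothesis.
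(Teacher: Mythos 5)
Your proposal is correct and follows essentially the same route as the paper: set $S_i=\rho(R_i)$, obtain a straight decomposition of $\{Y\}$ ending in a bounded family $\sV^m$, pull it back through $f$ via \cite{GTY1}*{Lemma 3.1.1}, and then splice on a straight decomposition of $f^{-1}(\sV^m)$ against the tail sequence $R_{m+1}\le R_{m+2}\le\cdots$. Your remark that effective properness is replaced by the fiber hypothesis, and the bookkeeping point about the family-level formulation of sFDC, are both accurate and consistent with the paper's argument.
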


\begin{proof} Let $R_1<R_2<\cdots$ be given. Set $\{Y\}=\sV^0$. Since $Y$ has straight finite decomposition complexity, and since $f$ is uniformly expansive, we take $S_i=\rho(R_i)$ as in Theorem \ref{DZ-ce} to find an $m$ and families $\sV^1,\sV^2,\ldots,\sV^m$ so that $\sV^{i-1}\xrightarrow{S_i}\sV^i$ and for which $\sV^m$ is bounded. Then, as before, we pull these families back to $X$ to obtain $f^{-1}(\sV^{i-1})\xrightarrow{R_i}f^{-1}(\sV^i)$. Since we assume that $f^{-1}(\sV^m)$ has straight finite decomposition complexity, we take the sequence $R_{m+1},R_{m+2},\dots$ and find $n$ and families $\sU^{m+1},\sU^{m+2},\ldots,\sU^{m+n}$ so that $f^{-1}(\sV^m)\xrightarrow{R_{m+1}}\sU^{m+1}$; $\sU^{m+j-1}\xrightarrow{R_{m+j}}\sU^{m+j}$ for $j>1$; and such that  $\sU^{m+n}$ bounded. Then, with $\sU^i=f^{-1}(\sV^i)$ for $i=0,1,\ldots, m$ we have $\sU^{i-1}\xrightarrow{R_i}\sU^{i}$ for all $i=1,2,\ldots,m+n$ as required.
\end{proof}

\begin{proposition} Let $G$ be a countable group expressed as a union of subgroups $G=\cup G_i$ where each $G_i$ has straight finite decomposition complexity. Then, $G$ has straight finite decomposition complexity.
\end{proposition}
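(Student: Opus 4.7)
The plan is to exploit that a proper left-invariant metric on a countable group has finite balls: for any fixed scale $R$, the ball $\overline{B}_R(e)$ is finite and therefore sits inside some $G_i$. The left cosets of that $G_i$ then partition $G$ into $R$-disjoint pieces, each isometric to $G_i$, so left-translating the straight decomposition data of $G_i$ across every coset will finish the decomposition chain for $G$.

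First I would reduce to the case in which $\{G_i\}$ is an ascending chain $G_1\subseteq G_2\subseteq\cdots$. Since $\bigcup G_i$ is already a group, the family is directed under inclusion, and any countable directed family of subgroups admits a cofinal chain, so this reduction is harmless. Given a sequence $R_1<R_2<\cdots$, pick $i_1$ so that $\overline{B}_{R_1}(e)\subseteq G_{i_1}$; such $i_1$ exists because the metric is proper and hence the ball is finite. Set $\sY^1=\{gG_{i_1}:g\in G\}$. Distinct cosets are $R_1$-disjoint: if $h_1\in g_1G_{i_1}$, $h_2\in g_2G_{i_1}$, and $d(h_1,h_2)\le R_1$, then by left-invariance $h_1^{-1}h_2\in \overline{B}_{R_1}(e)\subseteq G_{i_1}$, forcing $g_1G_{i_1}=g_2G_{i_1}$. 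Hence $\{G\}\xrightarrow{R_1}\sY^1$ with the second piece of the decomposition taken to be empty.

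Next, I would apply the sFDC of $G_{i_1}$ to the tail sequence $R_2<R_3<\cdots$ to obtain families $\sW^2,\sW^3,\ldots,\sW^n$ with $G_{i_1}\xrightarrow{R_2}\sW^2$, $\sW^{j-1}\xrightarrow{R_j}\sW^j$ for $3\le j\le n$, and $\sW^n$ uniformly bounded. Define $\sY^j=\{gV:g\in G,\,V\in\sW^j\}$. Because left multiplication by any $g\in G$ is a global isometry, each coset $gG_{i_1}$ is isometric to $G_{i_1}$, and translating the decomposition chain of $G_{i_1}$ coset by coset produces a valid decomposition $\sY^{j-1}\xrightarrow{R_j}\sY^j$ at each stage; moreover $\sY^n$ is uniformly bounded by the same constant as $\sW^n$, because translations preserve diameters. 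Concatenating yields $\{G\}\xrightarrow{R_1}\sY^1\xrightarrow{R_2}\cdots\xrightarrow{R_n}\sY^n$ with $\sY^n$ uniformly bounded, so $G$ has sFDC.

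The main obstacle is the initial combinatorial set-up, namely the selection of $G_{i_1}$ so that its left cosets are $R_1$-separated, together with the mild reduction to an ascending chain. Once these are arranged, the rest is a routine application of left-translation and the guaranteed decomposition of the single subgroup $G_{i_1}$.
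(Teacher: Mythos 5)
Your proof takes essentially the same route as the paper's: choose $G_{i_1}$ containing the ball $\overline{B}_{R_1}(e)$ (possible because the metric is proper), observe that the left cosets of $G_{i_1}$ are then $R_1$-disjoint and isometric to $G_{i_1}$, and push the straight decomposition chain of $G_{i_1}$ for the tail sequence $R_2<R_3<\cdots$ through every coset by left translation; the paper leaves this last translation step implicit and you spell it out correctly. One caveat: your justification for reducing to an ascending chain is wrong as stated --- a union of subgroups being the whole group does \emph{not} force the family to be directed (e.g.\ any group is the non-directed union of its cyclic subgroups), so the existence of a single $G_i$ containing $\overline{B}_{R_1}(e)$ genuinely requires a directedness hypothesis; however, the paper's own proof silently makes the same assumption, so this is a defect of the proposition's statement rather than a divergence between your argument and theirs.
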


\begin{proof} We equip $G$ with a proper left-invariant metric. Let $R_1<R_2<\cdots$ be given. Since the metric is proper, there is some $G_i$ that contains $B_{R_1}(e)$. Then, the decomposition of $G$ into cosets of $G_i$ is $R_1$-disjoint and each coset is isometric to $G_i$, which is assumed to have sFDC.
\end{proof}

Suppose that $G$ is a group acting by isometries (on the left) on the metric space $X$. For $R> 0$, the $R$-{\em coarse stabilizer} of $x\in X$ is the set $\{g\in G\mid d(g.x,x)<R\}$, see \cite{BD1,GTY1}.  

\begin{proposition}\label{iso-sfdc} Let $X$ be a metric space with sFDC. Suppose that $\sX=\{X_\alpha\}$ is a family of metric spaces such that for each $\alpha$ there is an isometry $\varphi_\alpha:X_\alpha\to X$. Then, $\{X_\alpha\}$ has sFDC as a family.
\end{proposition}

\begin{proof} Let $R_1\le R_2\le\cdots$ be given. Since $X$ is assumed to have sFDC we can find $n$ and families $\{X\}=\sY^0,\sY^1,\sY^2,\ldots,\sY^n$ so that $\sY^{i-1}\xrightarrow{R_i} \sY^{i}$, and $\sY^n$ is bounded. Define $\widetilde{\sY^0}=\{f_\alpha^{-1}(X)\}_{\alpha}$. For $i\ge 1$ define $\widetilde{\sY^i}=\{f^{-1}_\alpha(Y)\mid Y\in \sY^i, \alpha\}$. Now, any element of $\widetilde{\sY^0}$ is equal to $X_\alpha$ for some $\alpha$. Then, since $X=Y^0\cup Y^1$ with each $Y^i$ decomposing as an $R_i$ disjoint union of sets $Y^{ij}\in\sY^i$, we see that $X_\alpha=f^{-1}_\alpha(Y^0)\cup f_\alpha^{-1}(Y^1)$. Then, $f_\alpha^{-1}(Y^i)=\sqcup f_\alpha^{-1}(Y^{ij})$, with $f^{-1}(Y^{ij})\in \widetilde{\sY_1}$. Finally if $x\in f_\alpha^{-1}(Y^{ij})$ while $x'\in f^{-1}_\alpha(Y^{ij'})$ with $j\neq j'$, then $d(x,x')=d(f_\alpha(x),f_\alpha(x'))\ge R$. Thus $\widetilde{\sY^0}\xrightarrow{R_1}\widetilde{\sY^1}$. A similar argument shows that $\widetilde{\sY^{i-1}}\xrightarrow{R_i}\widetilde{\sY^{i}}$. 

Finally, we show that $\widetilde{Y^n}$ is uniformly bounded. By assumption $\sY^n$ is uniformly bounded. Thus, there is some $D>0$ so that $\diam(Y)\le D$ for every $Y\in\sY^n$. Now, suppose that $Y'\in\widetilde{\sY^n}$. Then, $Y'=f_\alpha^{-1}(Y'')$ for some $Y''\in \sY^n$ and some $\alpha$. Since $f_\alpha$ is an isometry, $\diam(Y')\le D$.

\end{proof}

\begin{proposition}\label{WR-iso} Let $G$ be a countable group in a proper left-invariant metric as in Section 2. Suppose $G$ acts by isometries on the metric space $X$ on the left. Then, for each $\gamma\in G$, there is an isometry $W_R(x)\to W_R(\gamma x)$ given by $\g\mapsto \gamma g\gamma^{-1}$. \end{proposition}

\begin{proof}
First we observe that $g\to \gamma g\gamma^{-1}$ is a bijection from $W_R(x)$ to $W_R(\gamma x)$ for each $\gamma$:

\begin{tabular}{ccc}
$g\in W_R(x)$ & $\iff$ & $d(gx,x)< R$ \\ 
 & $\iff$ & $d(\gamma gx, \gamma x)< R$ \\ 
 & $\iff$ & $d(\gamma g\gamma^{-1}(\gamma x),\gamma x)< R$ \\ 
 & $\iff$ & $\gamma g\gamma^{-1}\in W_R(\gamma x).$
\end{tabular} 

Now, for each $\gamma$ this map is an isometry. Indeed, if $g,h\in W_R(x)$, then $d(g,h)=\|g^{-1}h\|=\|\gamma g^{-1}\gamma^{-1}\gamma h\gamma^{-1}\|=d(\gamma g\gamma^{-1},\gamma h\gamma^{-1})$.
\end{proof}

\begin{proposition}\label{coarse-stab} Let $G$ be a countable group in a left-invariant proper metric acting on a metric space $X$ by isometries. Suppose that $X$ has straight finite decomposition complexity. If there is a $x_0\in X$ so that for every $R>0$ the $R$-coarse stabilizer of $x_0$ has straight finite decomposition complexity, then $G$ has straight finite decomposition complexity. 
\end{proposition}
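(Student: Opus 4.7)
The strategy is to invoke the fibering theorem above with the orbit map $f\colon G\to X$ defined by $f(g)=g.x_0$. By \cite{GTY1}*{Lemma 3.2.2}, this map is uniformly expansive, and $X$ has sFDC by hypothesis; so it remains only to verify that for every bounded family $\sV$ of subsets of $X$, the family $f^{-1}(\sV)$ has sFDC.

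The key observation is that each fibre $f^{-1}(V)$ is isometric to a subset of one fixed coarse stabilizer. Let $D$ bound the diameters of the sets in $\sV$, and for each $V\in\sV$ with nonempty preimage pick a representative $g_V\in f^{-1}(V)$. Since $G$ acts on $X$ by isometries and the metric on $G$ is left-invariant, for any $g\in f^{-1}(V)$ one has
\[ d\bigl((g_V^{-1}g).x_0,\,x_0\bigr)=d(g.x_0,g_V.x_0)\le D,\]
so $g_V^{-1}\cdot f^{-1}(V)\subseteq S_D$, where $S_D=\{h\in G:d(h.x_0,x_0)\le D\}$ is the $D$-coarse stabilizer of $x_0$, which has sFDC by hypothesis.

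Given any sequence $R_1\le R_2\le\cdots$, apply sFDC of $S_D$ to obtain $n$ and metric families $\sU^1,\ldots,\sU^n$ of subsets of $G$ with $\{S_D\}\xrightarrow{R_1}\sU^1\xrightarrow{R_2}\cdots\xrightarrow{R_n}\sU^n$ and $\sU^n$ uniformly bounded. For each $V\in\sV$, left-translating the $\sU^i$ by $g_V$ and intersecting with $f^{-1}(V)$ gives a straight decomposition of the single set $f^{-1}(V)$ over families $\sU^i_V$ whose members inherit the uniform diameter bounds of $\sU^i$ (left-translation is an isometry, and restriction does not increase diameters). Setting $\sW^i:=\bigcup_{V\in\sV}\sU^i_V$ then yields a straight decomposition $f^{-1}(\sV)\xrightarrow{R_1}\sW^1\xrightarrow{R_2}\cdots\xrightarrow{R_n}\sW^n$ with $\sW^n$ uniformly bounded, so $f^{-1}(\sV)$ has sFDC and the fibering theorem delivers sFDC for $G$. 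The only real content is the isometric reduction $f^{-1}(V)\hookrightarrow S_D$ in the second paragraph; once that is in hand, the remaining bookkeeping -- pooling per-$V$ decompositions into a single family at each stage -- uses only that the relation $\sX\xrightarrow{R}\sY$ is checked element-by-element on members of the source family, so no uniform disjointness across different $V$'s is required.
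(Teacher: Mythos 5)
Your proposal is correct and follows exactly the route the paper intends: the paper derives this proposition immediately from its fibering theorem together with the uniform expansiveness of the orbit map $g\mapsto g.x_0$ (citing \cite{GTY1}*{Lemma 3.2.2}), leaving the verification that preimages of bounded families have sFDC implicit. Your reduction of each fibre $f^{-1}(V)$ to a subset of a single coarse stabilizer $S_D$ via left translation, and the pooling of the resulting decompositions, is precisely the standard argument being invoked.
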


\begin{proof} We follow the reasoning of \cite[Proposition 3.2.3]{GTY1}. 

We may restrict our attention to the subset $G.x_0$ of $X$. We wish to apply the fibering theorem (Theorem \ref{Fibering}). To this end, we define $\pi: G\to G.x_0$ by $\pi(g)=g.x_0$. It is shown by Guentner, Tessera and Yu \cite[Lemma 3.2.2]{GTY1} that this map is uniformly expansive. Thus, it remains only to show that $\pi^{-1}(\sV)$ has sFDC for each bounded family $\sV$ in $G.x_0$.

Let $\sV$ be a bounded family in $G.x_0$. Let $R$ be so large that $\diam(V)\le R$ for all $V\in\sV$. Thus, (with $B_R(\cdot)$ denoting the open ball of radius $R$), we have that for each $V\in\sV$ there is a $\gamma_V\in G$ such that $B_R(\gamma_V.x_0)$ contains $V$. Next, we observe that $\pi^{-1}(B_R(\gamma_V.x_0))=W_R(\gamma_V.x_0)$. Thus the family $\pi^{-1}(\sV)$ consists of sets that are themselves subsets of sets in the family $\{W_R(\gamma_V.x_0)\}$.  

Using the assumption that $W_R(x_0)$ has sFDC, we apply Proposition \ref{iso-sfdc} and Proposition \ref{WR-iso} to conclude that the family $\{W_R(\gamma.x_0)\}_{\gamma\in G}$ has sFDC. Thus, by restricting the families $\sY^i$ from the realization of sFDC for the family $\{W_R(\gamma.x_0)\}_\gamma$ to subsets, we find that $\pi^{-1}(\sV)$ has sFDC whenever $\sV$ is a bounded family in $G.x_0$. 

Thus, $G$ has sFDC by Theorem \ref{Fibering}.
\end{proof}

\begin{corollary} 
The following results easily follow from this proposition.
\begin{enumerate}
	\item sFDC is closed under group extensions.
	\item sFDC is closed under free products with amalgamation and HNN extensions.
	\item sFDC is closed under finite graph products.
	\item FDC is closed under finite graph products.
\end{enumerate}
\end{corollary}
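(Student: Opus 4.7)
The plan is to derive all four parts by applying the fibering proposition just proved. In each case I will exhibit a metric space $X$ with sFDC on which the group acts, and I will verify that the $R$-coarse stabilizers have sFDC by writing them as finite unions of subsets coarsely equivalent to ingredients we already know have sFDC. Coarse invariance of sFDC and closure under finite unions \cite{DZ} are the background facts that make this strategy work.

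For (1), let $1\to N\to G\to Q\to 1$ be an extension with $N$ and $Q$ having sFDC. Equip $G$ and $Q$ with compatible proper left-invariant metrics so that the projection $\pi:G\to Q$ is $1$-Lipschitz, and let $G$ act on $Q$ by $g\cdot q=\pi(g)q$. The $R$-coarse stabilizer of $e_Q$ is $\pi^{-1}(B_R(e_Q))$; because the metric on $Q$ is proper, $B_R(e_Q)$ is finite, so this preimage is a finite union of cosets of $N$. Each coset is isometric to $N$, which has sFDC, so the fibering proposition delivers sFDC for $G$.

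For (2), if $G=A\ast_C B$ or $G$ is an HNN extension of $A$ along $C$, let $G$ act simplicially on its Bass--Serre tree $T$ with the edge-length metric. The tree has asymptotic dimension at most $1$, hence has sFDC. Fix a vertex $v_0$; for each $R>0$ the set $B_R(v_0)\cap V(T)$ is finite and meets only finitely many $G$-orbits, each vertex stabilizer being a conjugate of $A$, $B$, or (in the HNN case) $A$. Hence the $R$-coarse stabilizer of $v_0$ is a finite union of cosets $gH$ where $H$ is such a conjugate. A short triangle-inequality calculation shows that for fixed $h\in G$ the conjugation map $a\mapsto hah^{-1}$ satisfies $\bigl|\,\|hah^{-1}\|-\|a\|\,\bigr|\le 2\|h\|$, so $hHh^{-1}$ with the restricted metric from $G$ is coarsely equivalent to $H$ itself. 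Thus the coarse stabilizer is a finite union of subspaces coarsely equivalent to the hypothesized sFDC factor subgroups, and the fibering proposition applies.

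Part (3) is an induction on $n=|V(\G)|$; the case $n=1$ is trivial. For the inductive step I imitate the proof of Corollary \ref{Cor-PropA}: choose $v\in V(\G)$ and set $A=\{v\}\cup\lk(v)$, $B=\G-\{v\}$, $C=\lk(v)$, so that $\G\fG=G_A\ast_{G_C}G_B$ by Green. If $A=\G$ then $\G\fG=G_v\times G_C$, and the extension case (1) gives sFDC using the inductive hypothesis on $G_C$. If $A\neq\G$ then $|V(A)|,|V(B)|<n$, so $G_A$ and $G_B$ have sFDC by induction, and (2) applies. Part (4) is the same induction with the corresponding FDC permanence results of \cite{GTY1} (extensions and amalgamated products) substituted in place of (1) and (2). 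The main technical obstacle is the coarse bookkeeping in (2), namely verifying that a conjugate of a factor subgroup, with the restricted metric from $G$, is coarsely equivalent to the factor itself; without this one cannot transport sFDC from the abstract factor groups into the vertex stabilizers that appear in the coarse stabilizer.
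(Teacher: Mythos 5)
Your overall strategy---deducing all four parts from the fibering proposition---is exactly the paper's, and parts (1), (3) and (4) are carried out essentially as in the paper's proof (part (1) with a bit more detail, parts (3) and (4) by the same induction imitating Corollary \ref{Cor-PropA}). The problem is in part (2). You assert that $B_R(v_0)\cap V(T)$ is finite and conclude that the $R$-coarse stabilizer is a \emph{finite} union of cosets of conjugates of the vertex groups. This is false in general: in the Bass--Serre tree of $A\ast_C B$ the vertex $gA$ has valence $[A:C]$, so the tree is locally finite only when the edge group has finite index in both factors. Already for $F_2\ast_{\mathbb{Z}}F_2$ the $1$-ball around $v_0$ is infinite, and the $R$-coarse stabilizer $W_R(v_0)=\{g\mid d(gv_0,v_0)\le R\}$ is an \emph{infinite} union of cosets, one for each vertex of the orbit $Gv_0$ lying in $B_R(v_0)$. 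Your appeal to closure under finite unions therefore does not suffice, and the conjugation estimate you single out as the main technical obstacle is not where the real difficulty lies.

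What is actually needed to justify (2) is the infinite-union permanence for sFDC from \cite{DZ} (the analogue of Theorem \ref{C-Union} for sFDC): one argues by induction on $R$, showing roughly that $W_{R+1}(v_0)$ is obtained from $W_R(v_0)$ by multiplying by the factors, writes the result as a union of cosets which have sFDC uniformly, and exhibits a core $Y_r$ (built from $W_R(v_0)$ together with a ball in the factors) whose removal leaves the family $r$-disjoint. This is the argument of \cite{GTY1} for FDC and of \cite{BD2} for asymptotic dimension, transported to sFDC. To be fair, the paper's own proof of (2) is also terse---it simply asserts that the coarse stabilizers have sFDC---but it does not rest on the incorrect finiteness claim; it implicitly invokes this standard induction. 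Everything else in your write-up stands once (2) is repaired.
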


\begin{proof}
\begin{enumerate}
	\item Suppose that $1\to K\to G\xrightarrow{\phi} H\to1$ is an exact sequence of countable groups with $H$ and $K$ both having straight finite decomposition complexity. Let $G$ act on $H$ by the rule $g.h=\phi(g)h$. The $R$-coarse stabilizer is coarsely equivalent to $K$, so it has sFDC. Thus, by proposition \ref{coarse-stab}, $G$ has sFDC.
	\item This follows from the Bass-Serre theory of graphs of groups. More precisely, if $G$ is an amalgamated product (or HNN extension), then there is a tree $T$ and an action of $G$ on that $T$ by isometries with vertex stabilizers isomorphic to the factors of the amalgam. The coarse stabilizers of the action will therefore have sFDC and so $G$ itself will. 
	\item This follows from parts (1) and (2) using the technique of Proposition \ref{Cor-PropA} or \cite{An-Dre}.
	\item This is immediate from the results of \cite{GTY1} using the technique of Proposition \ref{Cor-PropA} or \cite{An-Dre}.
\end{enumerate}
\end{proof}

\bibliographystyle{alpha}

\end{document}